\theoremstyle{definition}
\newtheorem{Def}{Definition}
\newtheorem{Ex}{Example}
\newtheorem{Rem}{Remark}
\newtheorem{Cor}{Corollary}
\newtheorem{Prop}{Proposition}
\newtheorem{Thm}{Theorem}
\begin{document}
\title[Generalized Reeb spaces of special generic maps and lifts of maps]{Generalizations of Reeb spaces of special generic maps and applications to a problem on lifts of Morse functions}

\author{Naoki Kitazawa}
\keywords{Singularities of differentiable maps; generic maps. Differential topology.}
\subjclass[2010]{Primary~57R45. Secondary~57N15.}
\address{19-9-606 Takayama, Tsuzuki-ku, Yokohama, Kanagawa 224-0065, JAPAN}
\email{naokikitazawa.formath@gmail.com}
\maketitle
\begin{abstract}
A {\it Reeb space} is defined as the space of all the connected components of inverse images of a smooth map and it is a fundamental tool in the studies
 of differentiable manifolds using generic smooth maps whose codimension is not positive such as Morse functions, their higher dimensional versions including {\it fold maps} and general {\it stable} maps.

 A {\it special generic} map is a fold map and regarded as a generalization of Morse functions with just $2$ singular points on homotopy spheres and the Reeb space
 is a compact manifold whose dimension is equal to that of the target manifold and which can be immersed into the target manifold. 

In this paper, we generalize a quotient map onto a Reeb space of a special generic map. We define a map onto a polyhedron locally a quotient map induced from a special generic
 map. In fact, there have been such generalizations. For example, in 1996, Kobayashi and Saeki generalized stable maps into
 the plane of closed manifolds of dimensionl larger than $3$ to study geometry of such maps systematically and the author recently defined such tools. 

 We take advantage of the generalized maps to construct {\it lifts} of Morse functions of a certain class; the composition of the lift and the canonical projection is the original function. It is an
 answer of an explicit problem in the studies of lifts or desingularizations of smooth maps, or maps such that the compositions of the found maps and the canonical projections are original maps, which are fundamental and important in the studies of smooth maps and applications to algebraic and differential topology of manifolds.   
 
\end{abstract}

\section{Introduction.}
\label{sec:1}
\subsection{Fundamental tools.}
A {\it Reeb space} is defined as the quotient space of all the connected components of inverse images of a smooth map whose codimension is minus.
It is a fundamental important tool in the studies of Morse functions, {\it fold maps}, which are higher dimensional versions of Morse functions and general generic maps including {\it stable}
 maps and applications to geometry of differentaible manifolds. For Reeb spaces, see \cite{reeb} for example.

 As a fundamental fact, the Reeb space of a Morse functions is a graph and generally, for maps mentioned here, the Reeb spaces
 are polyhedra respecting the canonical triangulations of manifolds and the maps. For such facts, see \cite{shiota} and see also \cite{hiratukasaeki} and \cite{hiratukasaeki2} for example. In addition, for Morse functions, fold maps and
 stable maps etc., see \cite{golubitskyguillemin} for example.

We review {\it fold maps} and {\it special generic maps}.

A  {\it fold map} is a smooth map such that each singular point $p$ is
 of the form $$(x_1,\cdots,x_m) \mapsto (x_1,\cdots,x_{n-1},\sum_{k=n}^{m-i(p)}{x_k}^2-\sum_{k=m-i(p)+1}^{m}{x_k}^2)$$ for some
     integers $m,n,i(p)$ satisfying $m \geq n \geq 1$ and $0 \leq i(p) \leq \frac{m-n+1}{2}$. Note that a Morse function is a fold map.
The integer $i(p)$ is taken as a non-negative integer not larger than $\frac{m-n+1}{2}$ uniquely and we call $i(p)$ the {\it index} of $p$. The
 set of all the singular points of an index is a smooth submanifold of dimension $n-1$ and the
     restriction of the fold map to the singular set is an immersion and it is transversal if the fold map is {\it stable} (stable Morse functions exist densely on smooth closed manifolds
 and fold maps are generically stable).  

A {\it special generic map} is a fold map such that for all the singular points, the indices are $0$. Morse functions with just $2$ singular points on homotopy spheres and the canonical projections of unit spheres are simplest examples of special generic maps. We can know fundamental properties and important works on special generic maps in \cite{saeki}, \cite{saekisakuma} and \cite{wrazidlo} for example. 

\subsection{Contents of this paper.}
The Reeb space of a special generic map is regarded as an immersed compact manifold with a non-empty boundary whose dimension is same as that of the target manifold. In this paper, we generalize the quotient maps onto the Reeb spaces induced from special generic maps into Euclidean spaces as smooth maps onto compact smooth manifolds and give applications to a problem questioning that there exists a {\it lift} of a smooth map. More precisely, for a smooth map $f$ from an $m$-dimensional manifold into an $n$-dimensional manifold, does there exists
 an immersion, embedding or a map of an appropriate class $f_0$ or a {\it lift} into ${\mathbb{R}}^{n+k}$ such that for the canonical projection ${\pi}_{n+k,n}$,  $f={\pi}_{n+k,n} \circ f_0$ holds?   

Note that such generalizations of maps have been done in several works. For exmaple,  in 1996, Kobayashi and Saeki generalized stable maps into
 the plane of closed manifolds of dimensionl larger than $3$ to study geometry of such maps systematically and the author recently defined generalized maps of quotient maps induced from proper stable
 maps and more generally, proper triangulable smooth maps to show a theorem on the homology groups of Reeb spaces for more general spaces. As another example, a {\it shadow} \cite{turaev}
 is regarded as an extension of the quotient maps onto the Reeb spaces induced from stable fold maps on closed $3$-dimensional manifolds into the plane satisfying a condition on inverse images containing singular points and related studies are performed in \cite{costantinothurston} and \cite{ishikawakoda}.  

Last, studies of lifts of smooth maps are also fundametal and important in the studies of smooth maps and applications to algebraic and differential topology of differentaible manifolds. As a familiar example, plane curves can be lifted to classical knots in the $3$-dimensional space and there have been other various
results on the problems.

\begin{itemize}
\item Morse functions and stable
 maps of $2$ or $3$-dimensional manifolds into the plane are liftable to immersions or embeddings into appropriate dimensional spaces as shown in \cite{haefliger}, \cite{levine}, \cite{yamamoto} and \cite{yamamoto2} for example.
\item Generic maps between equi-dimensional manifolds are lifted to immersions or embeddings into one-dimensional higher spaces as done in \cite{saito} and see also \cite{blankcurley} for example.
\item Various special generic maps are lifted to immersions or embeddings as done in \cite{saekitakase} and \cite{nishioka} for example.
\item As a recent work \cite{kitazawa3}, the author considered a problem questioning that a {\it normal spherical Morse function} can be lifted to a special generic map into an Euclidean space show several results.
\end{itemize}

Here, we define a {\it normal spherical} Morse function, which is explained in \cite{kitazawa3}, and see also \cite{saekisuzuoka} for example. 
\begin{Def}
\label{def::1}
A stable fold map from a closed manifold of dimension $m$ into ${\mathbb{R}}^n$ satisfying $m \geq n$ is said to be {\it normal spherical} ({\it standard-spherical}) if the inverse image
 of each regular value is a disjoint union of (resp. standard) spheres (or points) and 
the connected component containing a singular point of the inverse image of a
 small interval intersecting with the singular value set at once in its interior is either of the following.
\begin{enumerate}
\item The ($m-n+1$)-dimensional standard closed disc.
\item A manifold PL homeomorphic to an ($m-n+1$)-dimensional compact manifold obtained by removing the interior of three disjoint ($m-n+1$)-dimensional smoothly embedded closed discs from the ($m-n+1$)-dimensional standard sphere.
\end{enumerate}  
\end{Def}

We review a fundamental property.

\begin{Prop}[\cite{saekisuzuoka},\cite{kitazawa},\cite{kitazawa2}]
\label{prop:1}
For a normal spherical Morse function $f$ on a closed manifold $M$ of dimension $m>1$, the quotient map $q_f$ on $M$ to the Reeb space $W_f$ induces an isomorphism of homology, cohomology and homotopy groups whose degrees are smaller than $m-1$.

\end{Prop}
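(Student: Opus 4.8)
The plan is to exploit the very simple local structure of the quotient map $q_f$ over the one-dimensional polyhedron $W_f$, deduce the (co)homology statement from a spectral sequence of $q_f$, and then obtain the statement for homotopy groups by passing to a universal cover. First I would record the local models. As $\dim W_f\le n=1$, the Reeb space $W_f$ is a finite graph, and the induced map $\bar f\colon W_f\to\mathbb R$ with $\bar f\circ q_f=f$ exhibits every $y\in W_f$ as an interior point of an edge (a regular value of $f$), a univalent vertex, or a vertex of valence $3$. Indeed, by Definition~\ref{def::1} and normal sphericity, the connected component through a critical point of the preimage under $f$ of a small interval about its critical value is either a standard disc $D^m$ (forcing index $0$ or $m$) or the manifold $P$ obtained by deleting three disjoint open $m$-discs from $S^m$ (forcing index $1$ or $m-1$), so no other index occurs. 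Hence, for a sufficiently small connected neighbourhood $U$ of any $y$, the set $q_f^{-1}(U)$ is homeomorphic to $S^{m-1}\times(\text{interval})$, to $D^m$, or to $P$; each of these is connected, and since $P\simeq S^{m-1}\vee S^{m-1}$ each is $(m-2)$-connected. Consequently every connected component of every fibre of $q_f$, together with its small neighbourhoods, is connected and $(m-2)$-connected.

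Next I would run the Leray spectral sequence of $q_f$ (legitimate since by \cite{shiota} $q_f$ is simplicial for suitable triangulations). The local computation shows that $R^0{q_f}_\ast\mathbb Z$ is the constant sheaf $\mathbb Z$ and that $R^q{q_f}_\ast\mathbb Z=0$ for $1\le q\le m-2$, so $E_2^{p,0}=H^p(W_f;\mathbb Z)$ while $E_2^{p,q}=0$ for $1\le q\le m-2$. For $p<m-1$ the entry $E_2^{p,0}$ is therefore the only one of total degree $p$, and the vanishing of the rows $1\le q\le m-2$ kills every differential into or out of it; thus the edge homomorphism $q_f^\ast\colon H^p(W_f;\mathbb Z)\to H^p(M;\mathbb Z)$ is an isomorphism for $p<m-1$. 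The same argument run with the homology spectral sequence of the simplicial map $q_f$ --- or, equivalently, with the filtration of $M$ by preimages of the sublevel sets of $\bar f$ and the five lemma, $q_f$ inducing an isomorphism on the relative homology of consecutive sublevel pairs in degrees below $m-1$ --- gives that ${q_f}_\ast$ is an isomorphism on $H_p(\,\cdot\,;\mathbb Z)$ for $p<m-1$, and the universal coefficient theorem upgrades this to cohomology with arbitrary coefficients.

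Finally, for the homotopy groups there is nothing to do for $\pi_0$, and if $m=2$ we are done, so assume $m\ge 3$. Since every $q_f^{-1}(U)$ above is simply connected, van Kampen's theorem applied along a cell structure of the graph $W_f$ shows that ${q_f}_\ast\colon\pi_1(M)\to\pi_1(W_f)$ is an isomorphism. The universal cover $\widetilde{W_f}$ of the graph $W_f$ is a tree, hence contractible; as the fibre components of $q_f$ are simply connected they lift, so the pullback $\widetilde M\to\widetilde{W_f}$ of $q_f$ is the universal cover of $M$ and carries the same local models. Re-running the homology argument for $\widetilde M\to\widetilde{W_f}$ gives $H_p(\widetilde M;\mathbb Z)\cong H_p(\widetilde{W_f};\mathbb Z)=0$ for $1\le p<m-1$, whence by the Hurewicz theorem, applied inductively, $\pi_p(M)=\pi_p(\widetilde M)=0=\pi_p(\widetilde{W_f})=\pi_p(W_f)$ for $2\le p<m-1$. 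I expect the real work to be in the first step: establishing that normal sphericity forces exactly these four local models and, in particular, that the neighbourhoods of the trivalent vertices are $(m-2)$-connected. Once that is done the homological conclusion is formal, and the homotopical one requires only the passage to the contractible universal cover of the graph.
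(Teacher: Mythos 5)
The paper does not actually prove Proposition~\ref{prop:1}: it is imported from \cite{saekisuzuoka}, \cite{kitazawa} and \cite{kitazawa2} without argument, so there is no in-paper proof to measure you against. Your argument is, in substance, the one used in those sources (in particular in \cite{saekisuzuoka}, where the connectivity of the quotient map to the Stein factorization of a map with sphere fibers is established exactly by combining the local models over the Reeb space with a Mayer--Vietoris/spectral-sequence computation and a Hurewicz argument on universal covers), and I find it correct: the four local models are handed to you directly by Definition~\ref{def::1}, the vanishing of $R^q{q_f}_\ast\mathbb Z$ for $1\le q\le m-2$ does make the edge homomorphism an isomorphism in degrees below $m-1$, van Kampen applies once $m\ge 3$ because all local pieces are simply connected, and the pullback over the universal cover of the graph is indeed the universal cover of $M$ since $q_{f\ast}$ is an isomorphism on $\pi_1$. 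Two small points of hygiene: for a \emph{normal spherical} (as opposed to \emph{normal standard-spherical}) function the regular fibres are homotopy $(m-1)$-spheres, so you should phrase the edge model as ``an $(m-2)$-connected fibre times an interval'' rather than literally $S^{m-1}\times(\text{interval})$ --- this is all your argument uses anyway; and the trivalent model is only specified up to PL homeomorphism in Definition~\ref{def::1}, which again does not affect the homotopy-theoretic input $P\simeq S^{m-1}\vee S^{m-1}$. Neither point creates a gap.
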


In this paper, we study a problem related to the last one of the five kinds of results above and show an advanced result by make use of introduced generalized maps. The contents of the present paper is as the following.
In the next section, we generalize the quotient maps to the Reeb spaces to smooth maps on closed manifolds onto compact smooth manifolds whose dimensions are lower than those of the source manifolds. 
In the last section, as a main work, we consider a spherical Morse function and represent this as a composition of two maps belonging to appropriate classes, which is regarded as a new answer for the problem studied in \cite{kitazawa3} and other related problems on lifts of smooth maps. 

\section{Pseudo special generic maps.}
\begin{Def}
\label{def:2}
Let $m>n$ be positive integers.
A smooth map $f_p$ from a closed manifold $M$ of dimension $m$ onto a compact manifold $W_p$ of dimension $n$ satisfying $\partial W_p \neq \emptyset$ is
 said to be {\it pseudo special generic} if the following hold.
\begin{enumerate}
\item $f_p {\mid}_{{f_p}^{-1}(W_p-\partial W_p)}:{f_p}^{-1}(W_p-\partial W_p) \rightarrow W_p-\partial W_p$ gives a smooth $S^{m-n}$-bundle.
\item For a small collar neighborhood $N(\partial W_p)$ of $\partial W_p$ in $W_p$, regarded as a trivial bundle $\partial W_p \times [0,1]$ where $\partial W_p \times \{0\}$
 corresponds to the boundary $\partial W_p$, for each point $(p,0) \in \partial W_p \times \{0\}$ and a small open neighborhood $U_p$,
 $f {\mid}_{{f_p}^{-1}(U_p \times [0,1])}:f^{-1}(U_p \times [0,1]) \rightarrow U_p \times [-1,1] $ has the same local form as a singular point of index $0$ of a fold map
 from an $m$-dimensional manifold into ${\mathbb{R}}^n$.
 From fundamental discussion of \cite{saeki} for example, ${f_p}^{-1}(N(\partial W_p))$ is a linear $D^{m-n+1}$-bundle over $\partial W_p$ given by the composition $f_p$ and the
 canonical projection and the bundle is seen as a normal bundle of the submanifold $f^{-1}(\partial W_p) \subset M$.
\end{enumerate}
\end{Def}
All the quotient maps onto the Reeb spaces defined from special generic maps into Euclidean spaces are regarded as pseudo special generic.
\begin{Def}
\label{def:3}
A pseudo special generic map is said to be {\it trivial} if the bundles appearing in the two conditions of Definition \ref{def:2} are trivial.
\end{Def}
\begin{Ex}
\begin{enumerate}
\item
The Reeb space of a special generic map of a homotopy sphere into an Euclidean space is a contractible smooth manifold whose dimension is same as that of the target Euclidean space
 and the boundary is a homology sphere with coefficient ring $\mathbb{Z}$.
The quotient map induced from a special generic map of a homotopy sphere into an Euclidean space of dimension smaller than $4$ is trivial. See \cite{saeki}.
\item
Saeki, Takase \cite{saekitakase} and Nishioka \cite{nishioka} have shown that special generic maps satisfying appropriate differential topological conditions admit lifts which are immersions or
 embeddings and most of them are trivial. In addition, the author \cite{kitazawa} has given lifts of spherical Morse functions as special generic maps and seen that some of the resulting special generic maps are trivial
 and that some are not. 
\end{enumerate}
\end{Ex}
\section{Results.}

\begin{Thm}
\label{thm:1}
\begin{enumerate}
\item
For a normal spherical Morse function $f:M \rightarrow \mathbb{R}$ on a closed manifold $M$ of dimension $m>2$ satisfying $m \neq 5$, there exist a compact smooth manifold $W_p$ of dimension $2$, a trivial pseudo special generic map $f_p:M \rightarrow W_p$ and a smooth map $g$ satisfying $f=g \circ f_p$. 
\item Let $m>n$ be integers satisfying $m \neq 5$ and $n=3,4$ and let $m<6$ or $m \geq 6$ and the Gromoll filtration number of all the diffeomorphism on $D^m$ fixing all the boundary points is always larger than $n-1$ {\rm (}for example $(m,n)=(13,3),(13,4)$; see \cite{crowleyschick} and also \cite{kitazawa}{\rm )}.
For a normal standard-spherical Morse function $f:M \rightarrow \mathbb{R}$ on a closed manifold $M$ of dimension $m$, there exist a compact smooth manifold $W_p$ of dimension $n$, a trivial pseudo special generic map $f_p:M \rightarrow W_p$ and a smooth map $g$ satisfying $f=g \circ f_p$.
\end{enumerate}
\end{Thm}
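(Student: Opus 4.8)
The plan is to keep the source manifold $M$ and the function $f$ fixed and to \emph{enhance} the Reeb quotient $q_f\colon M\to W_f$: thicken the one-dimensional Reeb space $W_f$ to an $n$-manifold $W_p$ and promote $q_f$ to a map over $W_p$ which, near $\partial W_p$, has the local form of an index-$0$ fold demanded by Definition~\ref{def:2}. Since $f$ is normal spherical, every critical point has index $0$, $1$, $m-1$ or $m$ (otherwise some regular level set would fail to be a disjoint union of spheres; see \cite{saekisuzuoka},\cite{kitazawa}), so after a small perturbation we may assume the critical values are pairwise distinct and $W_f$ is a finite graph with univalent vertices (indices $0$, $m$) and trivalent vertices (indices $1$, $m-1$). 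By Definition~\ref{def::1}, over a regular neighbourhood of a univalent vertex the part of $M$ is $D^m$; over a trivalent vertex it is $S^m$ minus the interiors of three disjoint $m$-discs --- genuinely standard when $f$ is normal standard-spherical, only PL homeomorphic to this in general --- and over the interior of an edge it is the cylinder $S^{m-1}\times(0,1)$. By Proposition~\ref{prop:1} (and its analogue for the maps we construct) all of this data is already seen by invariants of degree $<m-1$, so the delicate part will be concentrated in the top-dimensional gluings.

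First I would fix a thickening $W_p$ of $W_f$: replace each vertex by an $n$-ball and each edge by an $(n-1)$-handle $D^1\times D^{n-1}$, glued so that $W_p$ is a compact $n$-manifold with nonempty boundary admitting a deformation retraction onto $W_f$. The enhanced map $f_p$ is then assembled from standard local pieces. Over an edge handle, write $S^{m-1}$ as the unreduced suspension of $S^{m-2}$ and put on $S^{m-1}\times D^1$ the product of $\mathrm{id}_{D^1}$ with the natural map $S^{m-1}\to D^1$ whose $S^{m-2}$-level sets shrink to the two suspension points; those two points then sweep out index-$0$ fold loci lying over $\partial W_p$. Over a univalent-vertex $n$-ball, take the restriction to a half-ball of the canonical projection $S^m\to D^n$ onto $n$ coordinates: its source is $D^m$, its generic fibre is $S^{m-n}$, its fold locus has index $0$, and the preimage of the ``flat'' face of the half-ball is a copy of $S^{m-1}$ fitting an incident edge piece. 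Over a trivalent-vertex $n$-ball, delete from that model two small $m$-balls meeting its fold locus; the source becomes $S^m$ minus three open $m$-discs and the target becomes an $n$-ball whose boundary carries three ``gluing'' faces and three fold faces in alternation, matching a trivalent vertex. Every bundle occurring here is a trivial bundle over a ball, so $f_p$ is \emph{trivial} in the sense of Definition~\ref{def:3}; moreover each fibre of $f_p$ lies in a fibre of $q_f$, so $f$ is constant along the fibres of $f_p$ and factors as $f=g\circ f_p$ with $g$ smooth by the explicit local forms.

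The step that is not automatic --- and the main obstacle --- is gluing these local maps together along the copies of $S^{m-1}$ over the shared faces. On such a sphere the model coming from an incident edge presents $S^{m-1}$ with a distinguished ``equator'' $S^{m-2}$ (its $S^{m-n}$-fibre over $\partial D^{n-1}$) and a normal framing, while the model coming from the incident vertex presents possibly different ones; to produce a single smooth $f_p$ one must identify these choices, and do so consistently around each loop of $W_f$. The relevant isotopy classes are governed by $\pi_0$ and the low-degree homotopy of $\mathrm{Diff}(S^{m-1})$ and of $\mathrm{Diff}(D^{m-n+1},\partial)$, and this is exactly where the hypotheses enter. For $n=2$ the shared faces are one-dimensional and the identification reduces to the smooth Schoenflies theorem in dimension $m-1$ together with the freedom to modify the $2$-dimensional $W_p$; both are available precisely when $m-1\ne 4$, i.e.\ $m\ne 5$, and this yields part~(1). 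For $n=3,4$ one must in addition annihilate framing discrepancies supported over the $(n-1)$-dimensional vertex strata, which are obstructed by classes in $\pi_{n-1}$ of diffeomorphism groups of discs; the hypothesis that the Gromoll filtration of every diffeomorphism of $D^m$ fixing $\partial D^m$ exceeds $n-1$ forces these obstructions to vanish --- this is the role of \cite{crowleyschick}, exploited for special generic maps in \cite{saekitakase},\cite{nishioka},\cite{kitazawa},\cite{kitazawa3} --- and normal standard-sphericity guarantees that the pair-of-pants pieces are standard, so the assembled model genuinely reproduces $M$. Carrying out these identifications, and finally reparametrising $g$ so that $f=g\circ f_p$ holds on the nose, completes part~(2).
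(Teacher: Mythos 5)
Your overall strategy is the same as the paper's: decompose the target according to the Reeb graph $W_f$, thicken it to an $n$-manifold $W_p$, install explicit local models (products over edges, a half of the canonical projection $S^m\to\mathbb{R}^n$ over extremal vertices, that model with discs removed over trivalent vertices), and then glue along the separating copies of $S^{m-1}$ using the smooth Schoenflies/low-dimensional isotopy facts when $m-1<6$, $m-1\neq 4$, and the Gromoll filtration hypothesis when $m-1\geq 6$. Your identification of where $m\neq 5$ and the Gromoll condition enter matches the role of Proposition~\ref{prop:2} (and Proposition~\ref{prop:3}, which you do not mention explicitly but which is what lets the paper allow orientation-reversing identifications, at the cost that $W_p$ may come out non-orientable).

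The one step you dispatch too quickly is the triviality of $f_p$. You assert that $f_p$ is trivial in the sense of Definition~\ref{def:3} ``because every bundle occurring here is a trivial bundle over a ball,'' but Definition~\ref{def:3} demands global triviality of the $S^{m-n}$-bundle over $W_p-\partial W_p$ and of the $D^{m-n+1}$-bundle over $\partial W_p$, and local triviality over the balls of your handle decomposition says nothing about what happens after the gluings --- which is precisely where a nontrivial bundle (or a non-orientable $W_p$) can be created when one goes around a loop of $W_f$. The paper closes this by arranging the gluing diffeomorphisms so that the resulting linear bundles are orientable, and then appealing to the vanishing of the relevant Stiefel--Whitney classes over the low-dimensional bases involved ($\partial W_p$ is a closed manifold of dimension $n-1\leq 3$, and $W_p-\partial W_p$ retracts onto a complex of dimension $n-1$), using Proposition~\ref{prop:1} in the second part to control the second cohomology. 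Some such global argument is needed; without it the word ``trivial'' in your conclusion is not justified.
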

For the proof, we need facts and technique precisely presented in \cite{cerf}, \cite{saeki}, \cite{wrazidlo} and \cite{kitazawa} and we review some of them.

\begin{Def}
\label{def:5}
Let $k$ be an integer larger than $6$. Let $l$ be a positive integer not larger than $k$ and let a diffeomorphism $\phi$ on the unit disc $D^{k-1} \subset {\mathbb{R}}^{k-1}$ fixing all the points in the boundary can be smoothly isotoped to ${\phi}_0$
so that ${{\pi}_{k-1,l-1}} \mid_{D^{k-1}} \circ {\phi}_0={\pi}_{k-1,l-1} {\mid}_{D^{k-1}}$. The {\it Gromoll filtration number} of the diffeomorphism $\phi$ is defined as the maximal number $l$.  
\end{Def}

\begin{Prop}[\cite{cerf}, \cite{wrazidlo}]
\label{prop:2}
\begin{enumerate}
\item
\label{prop:2.1}
 Let $k \geq 6$ be an integer.
We can obtain an orientation preserving diffeomorphism on $S^k$ by regarding $D^k$ as the hemisphere of the unit sphere of ${\mathbb{R}}^{k+1}$ by an injection $x \mapsto (x,\sqrt{1-{\parallel x \parallel}^2}) \in S^k$ and extending
 the diffeomorphism fixing all the points of the boundary on $D^k$ to the unit sphere $S^k$ by using the identity map. If 
the Gromoll filtration number is larger than $l$, then we can smoothly isotope the diffeomorphism to ${\phi}_0$ so that ${\pi}_{k+1,l} {\mid}_{S^k} \circ {\phi}_0 = {\pi}_{k+1,l} {\mid}_{S^k}$
holds.
\item
\label{prop:2.2}
 For a positive integer $k$ smaller than $6$ and not $4$ and a positive integer $l \leq k$, then we can smoothly isotope an orientation diffeomorphism on $S^k$ to ${\phi}_0$ so
 that ${\pi}_{k+1,l} {\mid}_{S^k} \circ {\phi}_0 = {\pi}_{k+1,l} {\mid}_{S^k}$ holds.
\item
\label{prop:2.3}
 Let $k \geq 6$ be an integer. Then the {\it Gromoll filtration number} of the diffeomorphism $\phi$ on the
 unit disc $D^{k-1} \subset {\mathbb{R}}^{k-1}$ fixing all the points in the boundary is larger than $1$.
\item
\label{prop:2.4}
 Let $k$ be an integer larger than $6$, $\Sigma$ be a homotopy sphere of dimension $k$ and $f_{\Sigma}$ be a Morse function with just two singular points.
 then we can smoothly isotope any orientation preserving diffeomorphism on $\Sigma$ to ${\phi}_0$ so
 that $f_{\Sigma} \circ {\phi}_0 = f_{\Sigma}$ holds.
\end{enumerate}
\end{Prop}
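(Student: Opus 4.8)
The plan is to treat the four assertions separately, since they rest on quite different inputs: the first is a direct translation of Definition \ref{def:5} through the hemisphere embedding, the second follows from low-dimensional connectedness of diffeomorphism groups, and the third and fourth are applications of Cerf's pseudoisotopy theory as organized by Wrazidlo.

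For the first assertion I would simply compare the relevant projections. Writing $\iota : D^k \to S^k$, $\iota(x)=(x,\sqrt{1-{\parallel x \parallel}^2})$, the first $l$ coordinates of $\iota(x)$ are exactly $(x_1,\dots,x_l)$ for every $l \le k$, so $\pi_{k+1,l} \mid_{S^k} \circ\, \iota = \pi_{k,l} \mid_{D^k}$. If the Gromoll filtration number of the given diffeomorphism of $D^k$ is larger than $l$, then Definition \ref{def:5} (applied with its integer equal to $k+1$, legitimate since $k \ge 6$) lets us isotope the diffeomorphism, fixing $\partial D^k$, to a map commuting with $\pi_{k,l} \mid_{D^k}$. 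Extending this isotopy by the identity on the complementary hemisphere — admissible because it is relative to the boundary — yields $\phi_0$ on $S^k$, and the displayed identity gives $\pi_{k+1,l} \mid_{S^k} \circ\, \phi_0 = \pi_{k+1,l} \mid_{S^k}$ on the image of $\iota$, while this is immediate on the other hemisphere where $\phi_0$ is the identity.

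For the second assertion I would use that $\mathrm{Diff}^+(S^k)$ is connected for $k \in \{1,2,3,5\}$: this is classical for $k=1$, is Smale's theorem for $k=2$ and Hatcher's theorem for $k=3$, while for $k=5$ it follows from $\pi_0 \mathrm{Diff}^+(S^5) \cong \Theta_6 = 0$ via Cerf. Hence any orientation preserving diffeomorphism is smoothly isotopic to the identity, and $\phi_0 = \mathrm{id}$ trivially satisfies $\pi_{k+1,l} \mid_{S^k} \circ\, \phi_0 = \pi_{k+1,l} \mid_{S^k}$ for every $l$. The exclusion of $k=4$ is precisely because connectedness of $\mathrm{Diff}^+(S^4)$ is not known.

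The substance lies in the third and fourth assertions, which I would deduce from \cite{cerf} and \cite{wrazidlo}. For the third, the task is to isotope a boundary-fixing diffeomorphism of $D^{k-1}$, rel boundary, to one carrying each slice $\{x_1=c\}$ into itself; this says that the first nontrivial stage of the Gromoll filtration exhausts $\pi_0 \mathrm{Diff}(D^{k-1},\partial) \cong \Theta_k$, which holds in the range $k \ge 6$ by the cited results. For the fourth, I would first isotope $\phi$ to fix the two critical points of $f_{\Sigma}$ and to respect the gradient flow near them, use the resulting decomposition of $\Sigma$ into two discs glued along a level sphere $S^{k-1}$, and then straighten $\phi$ along the levels of $f_{\Sigma}$ so that the height is preserved, i.e. realize the sphere analogue of the third assertion transported through $f_{\Sigma}$; the hypothesis $k>6$ (so that the gluing sphere has dimension at least $6$) is what lets the exotic gluing be absorbed. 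The main obstacle throughout is exactly making these slice-preserving isotopies coherent in the parameter $c$ and compatible with the twist encoding the exotic smooth structure — this is the content of Cerf's theorem and is what forces the dimension restrictions; the accompanying bit of care is to keep the index bookkeeping between Definition \ref{def:5} and the projections correct in each assertion.
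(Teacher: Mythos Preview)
The paper does not prove Proposition~\ref{prop:2} at all: it is stated with the attribution ``[\textit{cerf}, \textit{wrazidlo}]'' and no argument is given, so there is no in-paper proof to compare your proposal against. Your write-up therefore goes well beyond what the author does, who simply imports these facts from the literature and uses them as black boxes in the proof of Theorem~\ref{thm:1}.

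That said, your sketch is a reasonable unpacking of why those cited results yield the four assertions. A couple of small remarks. In part~(1) you rely on the isotopy of Definition~\ref{def:5} being \emph{relative to the boundary} in order to extend by the identity over the lower hemisphere; the paper's Definition~\ref{def:5} does not state this explicitly, though it is the standard convention for the Gromoll filtration and is certainly what is intended. In part~(4) your outline is correct in spirit---decompose $\Sigma$ along a regular level of $f_\Sigma$ into two discs and straighten on each piece---but the phrase ``the hypothesis $k>6$ (so that the gluing sphere has dimension at least $6$) is what lets the exotic gluing be absorbed'' is doing a lot of work; the actual mechanism is the level-preserving isotopy theorem of Cerf applied in the setting of \cite{wrazidlo}, and making the two disc-pieces match coherently across the twisted gluing is exactly the nontrivial content that you defer to those references. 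None of this is a gap so much as an acknowledgment that parts~(3) and~(4) really are theorems of \cite{cerf} and \cite{wrazidlo}, which is precisely why the paper cites rather than proves them.
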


We easily obtain the following, which is a fundamental proposition in the proof.

\begin{Prop}
\label{prop:3}
\begin{enumerate}
\item
\label{prop:3.1}
 Let $k \geq 6$ be an integer and $l \leq k$. be a positive integer, then we have
 orientation reversing diffeomorphisms $r_{S^k}$ and $r_l$ on $S^k$ and ${\mathbb{R}}^l$, respectively so that ${\pi}_{k+1,l} {\mid}_{S^k} \circ r_{S^k} =r_l \circ {\pi}_{k+1,l} {\mid}_{S^k}$
holds.
\item
\label{prop:3.2}
 For a positive integer $k$ smaller than $6$ and not $4$ and a positive integer $l \leq k$, 
then we have orientation reversing diffeomorphisms $r_{S^k}$ and $r_l$ on $S^k$ and ${\mathbb{R}}^l$, respectively so that ${\pi}_{k+1,l} {\mid}_{S^k} \circ r_{S^k} =r_l \circ {\pi}_{k+1,l} {\mid}_{S^k}$ holds.
\item
\label{prop:3.3}
 Let $k$ be an integer larger than $6$ and $\Sigma$ be a homotopy sphere of dimension $k$ on which there exists an orientation reversing diffeomorphism and $f_{\Sigma}$ be a Morse function with just two singular points.
 then we have orientation reversing diffeomorphisms $r_{\Sigma}$ and $r$ on $S^k$ and ${\mathbb{R}}$, respectively and $f_{\Sigma} \circ r_{\Sigma} =r \circ f_{\Sigma}$ holds.
\end{enumerate}
\end{Prop}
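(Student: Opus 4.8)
The plan is to prove (\ref{prop:3.1}) and (\ref{prop:3.2}) by exhibiting an explicit orientation-reversing model that ignores the smooth structure, and to obtain (\ref{prop:3.3}) from a Cerf-theoretic normalization of the two-critical-point Morse functions of the kind already encoded in Proposition \ref{prop:2}.

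For (\ref{prop:3.1}) and (\ref{prop:3.2}) I would take $r_l \colon {\mathbb R}^l \to {\mathbb R}^l$ to be the reflection $(y_1,\dots ,y_l)\mapsto (-y_1,y_2,\dots ,y_l)$, which is orientation-reversing since $l\ge 1$, and let $R \colon {\mathbb R}^{k+1}\to {\mathbb R}^{k+1}$ be the linear reflection $(x_1,\dots ,x_{k+1})\mapsto(-x_1,x_2,\dots ,x_{k+1})$. Since $R$ preserves the unit sphere $S^k$ and carries the outward unit normal at each point of $S^k$ to the outward unit normal at its image, and $\det R=-1$, the restriction $r_{S^k}:=R{\mid}_{S^k}$ is an orientation-reversing diffeomorphism of $S^k$; and ${\pi}_{k+1,l}{\mid}_{S^k}\circ r_{S^k}=r_l\circ{\pi}_{k+1,l}{\mid}_{S^k}$ is immediate from the coordinates. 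This argument is insensitive to $k$, so it covers both statements at once; the case division merely mirrors that of Proposition \ref{prop:2}, and one may alternatively deduce (\ref{prop:3.1}) and (\ref{prop:3.2}) from Proposition \ref{prop:2} by writing an arbitrary orientation-reversing diffeomorphism as the composite of this fixed model with an orientation-preserving one and normalizing the latter.

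For (\ref{prop:3.3}) I would first normalize $f_{\Sigma}$ so that its two critical values are $a<b$, and aim to construct $r_{\Sigma}$ with $f_{\Sigma}\circ r_{\Sigma}=a+b-f_{\Sigma}$; taking $r(t):=a+b-t$, which is orientation-reversing on ${\mathbb R}$, then finishes. By hypothesis $\Sigma$ admits an orientation-reversing diffeomorphism $\rho$. Now $f_{\Sigma}\circ\rho$ and $a+b-f_{\Sigma}$ are both Morse functions on the \emph{same} oriented manifold $\Sigma$, each having exactly two critical points and critical values $a$ and $b$; hence each presents $\Sigma$ as a twisted sphere $D^{k}\cup_{\varphi}D^{k}$ with a gluing diffeomorphism $\varphi$ of $S^{k-1}$, and since $k>6$ the isotopy class of $\varphi$ is determined by the oriented diffeomorphism type of $\Sigma$, so the two gluing classes coincide. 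Using the Cerf-theoretic technique and Gromoll-filtration estimates underlying Proposition \ref{prop:2} (in particular (\ref{prop:2.4})), this coincidence of gluing data can be promoted to an \emph{orientation-preserving} diffeomorphism $h$ of $\Sigma$ with $(f_{\Sigma}\circ\rho)\circ h=a+b-f_{\Sigma}$. Then $r_{\Sigma}:=\rho\circ h$ is orientation-reversing, because $\rho$ is and $h$ is not, and satisfies $f_{\Sigma}\circ r_{\Sigma}=a+b-f_{\Sigma}=r\circ f_{\Sigma}$.

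The hard part will be this last promotion step: passing from the agreement of gluing data to an honest orientation-preserving diffeomorphism equating the two Morse functions, which is where the hypothesis $k>6$ and the input of \cite{cerf}, \cite{wrazidlo} and \cite{kitazawa} are genuinely used, and where the orientation bookkeeping has to be done carefully. The hypothesis that $\Sigma$ carries an orientation-reversing diffeomorphism is consumed only in the choice of $\rho$, and it is clearly necessary since $r_{\Sigma}$ is itself orientation-reversing. The explicit reflections in (\ref{prop:3.1}) and (\ref{prop:3.2}), and the final composition in (\ref{prop:3.3}), are routine.
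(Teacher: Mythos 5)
Your parts (\ref{prop:3.1}) and (\ref{prop:3.2}) coincide with the paper's argument: the paper's ``reflection by a hyperplane containing the origin'' on the target and the pole-swapping map on the source are exactly your $r_l$ and $R{\mid}_{S^k}$, and your explicit coordinate computation is a correct (and welcome) expansion of what the paper only indicates by a figure; you are also right that the case split by $k$ is inherited from Proposition \ref{prop:2} rather than forced by the reflection itself. For part (\ref{prop:3.3}) you take a genuinely more structured route than the paper. The paper's sketch simply says to ``send each singular point to another singular point'' and asserts that this yields the desired orientation-reversing diffeomorphism; it does not explain why a point-swapping diffeomorphism compatible with $f_{\Sigma}$ exists on a general homotopy sphere, nor why it can be made orientation-reversing (note that on $S^k$ with $k$ odd the antipodal map swaps the critical points of the height function yet preserves orientation, so swapping critical points alone does not settle the orientation). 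Your factorization $r_{\Sigma}=\rho\circ h$, with $\rho$ the hypothesized orientation-reversing diffeomorphism and $h$ an orientation-preserving correction, makes explicit where that hypothesis is consumed and reduces everything to a uniqueness statement for two-critical-point Morse functions on $\Sigma$. That reduction is sound, but the ``promotion'' step you flag is precisely the nontrivial content: passing from equality of gluing classes in $\pi_0\mathrm{Diff}^+(S^{k-1})\cong\Theta_k$ (valid for $k\geq 6$ by Cerf) to an honest orientation-preserving $h$ with $(f_{\Sigma}\circ\rho)\circ h=a+b-f_{\Sigma}$ requires the Cerf/Gromoll-filtration input behind Proposition \ref{prop:2} (\ref{prop:2.4}), and you leave it as a named black box. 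Since the paper's own proof is only a sketch that elides the same point, your proposal is at least as complete as the original; but if you want a self-contained argument, the promotion step is the one place where a real proof still has to be written.
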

\begin{proof}[Sketch of the proof]
We are enough to consider two appropriate antipodal poles and send each pole to another pole in the case where the source manifold is a unit sphere or each singular point to
 another singular point in the case where the source manifold is a general homotopy sphere with a Morse function with just two singular points. This gives us a desired orientation
 reversing diffeomorphism on the source homotopy sphere. A desired orientation reversing diffeomorphism on the target space is given by a reflection by a hyperplane containing the origin. See
 also FIGURE \ref{fig:1}.
\end{proof}
\begin{figure}
\includegraphics[width=35mm]{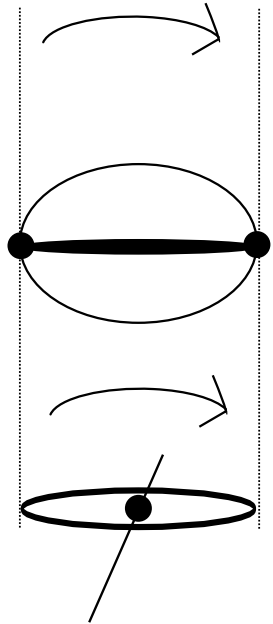}
\caption{Orientation reversing diffeomorphisms of a homotopy sphere and the image by a special generic map (the case of a canonical projection of a unit sphere; the arrows represent each orientation reversing diffeomorphism).}
\label{fig:1}
\end{figure} 
\begin{proof}[Proof of Theorem \ref{thm:1}]
We prove the first part.
First, as done in \cite{kitazawa} for example, we locally lift Morse functions to smooth maps whose singular points are of the same form as that of a singular point of index $0$ of a fold map. 
First, for the connected component of the inverse image of a closed interval
 containing just one singular point, we lift as drawn in FIGURE \ref{fig:2}. 
\begin{figure}
\includegraphics[width=35mm]{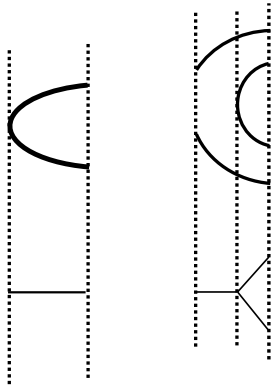}
\caption{Local lifts of normal spherical Morse functions.}
\label{fig:2}
\end{figure} 
For a connected component containing a singular point of index $0$, we lift as the left figure. The image is a $2$-dimensional closed disc with $2$ points in its corner. This is also regarded as a cobordism of special generic functions such that one of the function is null and the other is a function on a connected manifold or as a result a special generic function on a standard sphere. 
For a connected component containing a singular point of index $1$, we lift as the right figure. The image is a $2$-dimensional closed disc with $6$ points in its corner. This is also regarded as a cobordism of two special generic functions such that one of the function is a function on a connected manifold and that the other is a function on a manifold having two connected
 components. For cobordisms of special generic functions, see \cite{saeki3} for example. 

For the connected component of the inverse image of a closed interval
 containing no singular point, we lift the function as the product of a Morse function with just two singular points on the homotopy sphere appeared as the corresponding inverse image and the closed interval.
 
Last, to glue together on each connected component of each point of the boudary of each closed interval in the target manifold, we apply Proposition \ref{prop:2} and Proposition \ref{prop:3}. By applying only Proposition \ref{prop:2}, we obtain a special generic map and this is also done in \cite{kitazawa}. By applying not only
 Proposition \ref{prop:2}, but also Proposition \ref{prop:3}, we can not always obtain a pseudo special generic map which can be realized as the quotient map of a special generic map; the Reeb space may be non-orientable. However, by
 the construction or the gluing, we can obtain the bundle of the first condition of Definition \ref{def:2} as an orientable bundle and as a fundamental discussion on the topological properties of linear bundles, the bundle is trivial. We can construct the second bundle as a trivial bundle. Note that desired smooth map $g$ from the target space of the pseudo special generic map into $\mathbb{R}$ is also obtained as a global map; in the locall constructions, the map is of course constructed as a smooth map.   

The second part can be shown in a similar method. As a most important different point, we locally lift functions into a map of higher dimensional maps. 
For a connected component containing a singular point of index $0$, we lift so that the image is a $n$-dimensional closed disc whose corner is $S^{n-1}$. This is also regarded as a cobordism of special generic maps whose images are closed discs such that one of the map is null and the other is a map on a connected manifold or as a result regarded as the canonical projection of a unit sphere. 
For a connected component containing a singular point of index $1$, we lift so that image is a $n$-dimensional closed disc whose corner is three disjoint ($n-1$)-dimensional stadard spheres. This is also regarded as a cobordism of two special generic maps such that one of the map is regarded as the canonical projection of a unit sphere and that the other is a disjoint union of two canonical projections of unit spheres. For cobordisms of special generic functions and maps, see \cite{saeki3} and also \cite{sadykov} for example. 
For the connected component of the inverse image of a closed interval
 containing no singular point, we lift a function naturally as the higher dimensional version of the case above.

We can discuss the last part similarly. Note that the 2nd homology and cohomology groups of the source manifold whose coefficient ring is $\mathbb{Z}$ vanish from Proposition \ref{prop:1}. We can obtain the bundle of the first condition of Definition \ref{def:2} as an orientable bundle over a $2$- or $3$-dimensional closed manifold and as a fundamental discussion on the topological properties of linear bundles, the bundle is trivial (since the second and third Stiefel Whitney classes vanish). We can construct the second bundle as a trivial bundle. Note that desired smooth map $g$ from the target space of the pseudo special generic map into $\mathbb{R}$ is also obtained as a global map.

\end{proof}

\begin{figure}
\includegraphics[width=35mm]{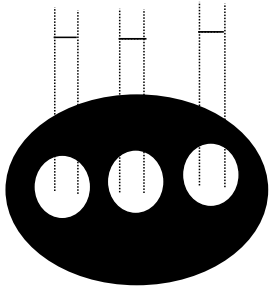}
\caption{An example of lifts of normal spherical Morse functions.}
\label{fig:3} 
\end{figure} 
\begin{figure}
\includegraphics[width=35mm]{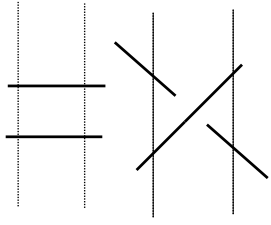}
\caption{Local topology of the surfaces near the black intervals of FIGURE \ref{fig:3}; the existence of the right case may make the target manifold of the pseudo special generic
 map non-orientable but in this case the bundles in the conditions of Definitions \ref{def:2} can be orientable.}
\label{fig:4}
\end{figure} 
 
For examples, see FIGURE \ref{fig:3} and FIGURE \ref{fig:4} ($n=2$ case).

Compare Theorem \ref{thm:1} and Theorem 5 of \cite{kitazawa}; we cannot construct a lift as a special generic map so that the underlying pseudo special generic map is trivial when the source manifold is non-orientable and the dimension of the Euclidean space of the target is larger $2$. 
Last, we have the following.

\begin{Cor}
\label{cor:1}
A spherical Morse function on a closed manifold of dimension larger than $2$ is represented as the composition of a smooth map into ${\mathbb{R}}^3$ regarded as the composition of a trivial pseudo special generic map and an appropriate smooth embedding of the target space of the map and the canonical projection ${\pi}_{3,1}$ defined by $(x_1,x_2,x_3) \in {\mathbb{R}}^3 \mapsto x_1 \in \mathbb{R}$.  
\end{Cor}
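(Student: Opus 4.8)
The plan is to read this off from the first part of Theorem \ref{thm:1} together with an embedding construction for the two-dimensional target. By Theorem \ref{thm:1} (1), applied to the given spherical Morse function $f\colon M\rightarrow\mathbb{R}$, there are a compact smooth surface $W_p$ with $\partial W_p\neq\emptyset$, a trivial pseudo special generic map $f_p\colon M\rightarrow W_p$ and a smooth map $g\colon W_p\rightarrow\mathbb{R}$ with $f=g\circ f_p$ (note that $g$ is the unique map through which $f$ factors via $f_p$, since $f_p$ is onto). It therefore suffices to construct a smooth embedding $e\colon W_p\hookrightarrow\mathbb{R}^{3}$ with ${\pi}_{3,1}\circ e=g$: then $F:=e\circ f_p\colon M\rightarrow\mathbb{R}^{3}$ satisfies ${\pi}_{3,1}\circ F=({\pi}_{3,1}\circ e)\circ f_p=g\circ f_p=f$, and $F$ is by construction the composition of the trivial pseudo special generic map $f_p$ with the smooth embedding $e$ of its target space, which is exactly the asserted representation of $f$.

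To produce $e$ I would work along the explicit construction of $W_p$ and $g$ in the proof of Theorem \ref{thm:1} (1). There $W_p$ is obtained by gluing finitely many pieces lying over closed subintervals of $\mathbb{R}$: over a subinterval containing an index $0$ value a disc on which $g$ is a height function with one interior minimum, over a subinterval containing an index $1$ value a disc on which $g$ has one interior saddle, and over a subinterval with no singular value a product $L\times[a,b]$ of a compact $1$-manifold $L$ and an interval with $g$ the projection to $[a,b]$; the pieces are glued along arcs contained in level sets $g^{-1}(\mathrm{const})$. Each piece admits an obvious smooth embedding into $\mathbb{R}\times\mathbb{R}^{2}$ whose first coordinate is $g$: a product piece by $(v,t)\mapsto(t,\iota(v))$ for an embedding $\iota$ of $L$ into $\mathbb{R}^{2}$ (every compact $1$-manifold embeds in the plane), and near an interior critical point of $g$ by the graphs $\{(a+x^{2}\pm y^{2},x,y)\}$, with analogous normal forms along the critical points of $g{\mid}_{\partial W_p}$. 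One then glues these piecewise embeddings and smooths corners to get $e$ with ${\pi}_{3,1}\circ e=g$.

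The main obstacle, as usual in such cut-and-paste constructions, is global compatibility: along each gluing level set the planar pictures of the arcs produced by the two adjacent pieces must agree up to an ambient isotopy of $\mathbb{R}^{2}$, and components of $W_p$ lying over the same value of $g$ must be kept disjoint in $\mathbb{R}^{2}$. This is controlled by the combinatorics of the Reeb graph of $f$, equivalently of the handle decomposition of $W_p$ determined by $g$: one propagates the embedding upward and, at each cap, product or saddle piece, realises the prescribed birth, band surgery or splitting of planar curves. Because the base of $g$ is one-dimensional and every relevant level set is a compact $1$-manifold in the plane, there is no knotting or linking obstruction, so after re-embedding intermediate level sets by plane isotopies and inserting thin collars the gluings can always be carried out; in particular the argument is insensitive to whether $W_p$ is orientable, which matches the familiar fact that every compact surface with non-empty boundary embeds in $\mathbb{R}^{3}$. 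This yields $e$, and with it the desired factorization of $f$ and the corollary.
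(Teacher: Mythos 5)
Your argument is correct and follows essentially the same route as the paper: both reduce to Theorem \ref{thm:1} (1) and then realize the surface $W_p$ in ${\mathbb{R}}^3$ by an embedding that is level-preserving over $g$, assembled from the local pieces of that construction with the third coordinate absorbing the orientation-reversing gluings (the paper phrases this as ``pushing some local parts forward'' out of the planar picture of FIGURE \ref{fig:4}). The only caveat, shared with the paper's own proof, is that you implicitly inherit the hypotheses of Theorem \ref{thm:1} (1) (normality and $m \neq 5$), which the Corollary as stated does not mention.
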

\begin{proof}
If the resulting pseudo special generic map is regarded as a map induced from a special generic map into the plane, then we can represent the Morse function as the composition of a special generic map into the plane obtained as a lift of the function and the canonical projection onto $\mathbb{R}$. We must sometimes push some local parts forward to realize the target space of the resulting pseudo special generic map in an appropriate Euclidean space or ${\mathbb{R}}^2 \times \mathbb{R}$. If in gluing local maps, we must apply the right case of FIGURE \ref{fig:4} (\ref{fig:3}), then we need
 to push this part forward to realize the target space of the resulting pseudo special generic map in an appropriate Euclidean space or ${\mathbb{R}}^2 \times \mathbb{R}$. From these discussions, we obtain the statement.   
\end{proof}
\begin{figure}
\includegraphics[width=55mm]{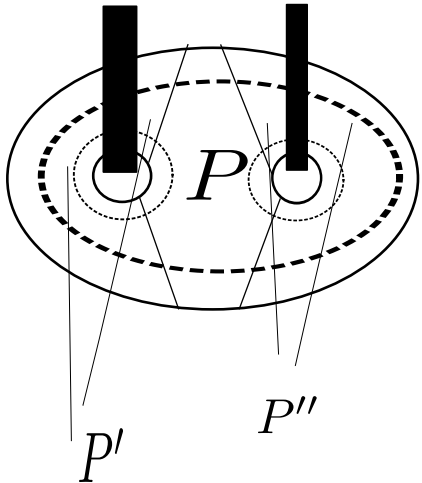}
\caption{The target manifold $W_p$ of a pseudo special generic map. 
$P^{\prime}$ and $P^{\prime \prime}$ are regions located in the left (right) side of the two thiin lines in the left (resp. right) and $P$ is the region of the center). Local surfaces
 in the black squares are like ones in FIGURE \ref{fig:4}.}
\label{fig:5}
\end{figure} 
\begin{Thm}
\label{thm:2}
A spherical Morse function $f$ on a closed manifold $M$ of dimension $m>2$ is represented as the composition of an embedding lift to ${\mathbb{R}}^n$
 where $n \geq \max \{\frac{3m+3}{2},m+2+1\}+1$ holds and the canonical projection ${\pi}_{n,1}$ defined by $(x_1,\cdots,x_n) \in {\mathbb{R}}^n \mapsto x_1 \in \mathbb{R}$ and we
can take the embedding lift so that the composition of the embedding and the canonical projection ${\pi}_{n,3}$ defined
 by $(x_1,\cdots,x_n) \in {\mathbb{R}}^n \mapsto (x_1,x_2,x_3) \in {\mathbb{R}}^3$ is a map as mentioned in Corollary \ref{cor:1}, represented as the composition of a trivial pseudo special generic map into the $2$-dimensional compact manifold and an embedding into ${\mathbb{R}}^3$.  
\end{Thm}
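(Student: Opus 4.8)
The plan is to bootstrap from Corollary \ref{cor:1}. Applying it to the spherical Morse function $f:M \to \mathbb{R}$ yields a compact surface $W_p$ with $\partial W_p \neq \emptyset$, a trivial pseudo special generic map $f_p:M \to W_p$, and a smooth embedding $\iota:W_p \hookrightarrow {\mathbb{R}}^3$ with $f={\pi}_{3,1} \circ \iota \circ f_p$. Since ${\pi}_{n,1}={\pi}_{3,1} \circ {\pi}_{n,3}$, it therefore suffices to produce a smooth embedding $F:M \hookrightarrow {\mathbb{R}}^n$ of the special form $F=(\iota \circ f_p, \widetilde{F})$ for a smooth map $\widetilde{F}:M \to {\mathbb{R}}^{n-3}$: then ${\pi}_{n,3} \circ F=\iota \circ f_p$ is exactly a map of the type named in Corollary \ref{cor:1}, and ${\pi}_{n,1} \circ F={\pi}_{3,1} \circ \iota \circ f_p=f$. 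Because $\iota$ is an embedding, this in turn reduces to embedding $M$ into $W_p \times {\mathbb{R}}^{n-3}$ \emph{over} $W_p$, i.e. by a map whose composition with the projection to $W_p$ is $f_p$.

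I would build this embedding over $W_p$ from the triviality of $f_p$. By Definition \ref{def:2} and Definition \ref{def:3}, over $W_p-\partial W_p$ the map $f_p$ is a trivial $S^{m-2}$-bundle, while over a collar $N(\partial W_p)=\partial W_p \times [0,1]$ it is the index-$0$ fold model on a trivial disc bundle $\partial W_p \times D^{m-1}$ with $f_p(p,v)=(p,\parallel v\parallel^2)$, and the two pieces are glued over $\partial W_p \times (0,1]$ along the sphere of radius $\sqrt{s}$ inside the disc bundle. Assuming $n \geq m+2$, I would define $\widetilde{F}:M \to {\mathbb{R}}^{m-1} \subset {\mathbb{R}}^{n-3}$ by $\widetilde{F}(p,v)=v$ over the collar and, over $W_p-\partial W_p$, by a fixed standard embedding $S^{m-2} \hookrightarrow {\mathbb{R}}^{m-1}$ on each fibre (using the trivialization), smoothly interpolating on the overlap. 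A direct check shows $(f_p,\widetilde{F})$ is injective and an immersion: over the collar $(f_p,\widetilde{F})(p,v)=((p,\parallel v\parallel^2),v)$ is visibly injective and immersive, and over the interior each fibre is carried embedded into the Euclidean factor complementary to $W_p$; so, $M$ being closed, it is an embedding. Composing with $\iota \times \mathrm{id}_{{\mathbb{R}}^{m-1}}$ and then with the standard inclusion ${\mathbb{R}}^{m+2} \hookrightarrow {\mathbb{R}}^n$ gives $F$ for every $n$ in the asserted range $n \geq \max\{\frac{3m+3}{2}, m+2+1\}+1$, which in particular exceeds $m+2$; this is also the metastable range in which, by Proposition \ref{prop:1}, the high connectivity of $M$ makes Haefliger-type embedding and isotopy results available, should one prefer to deduce the embedding abstractly and then normalize it.

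The step I expect to be the real obstacle is making the gluing and the corner-smoothing compatible with all the projections simultaneously: one must choose the collar and the $S^{m-2}$-bundle over $W_p-\partial W_p$ so that their union is smooth as the given manifold $M$, and so that $\widetilde{F}$ extends smoothly across the locus where the two pieces are glued, and one must carry the auxiliary "pushing forward" of local pieces used in the proof of Corollary \ref{cor:1} — including the possibly non-orientable local surface configurations of FIGURE \ref{fig:4} and FIGURE \ref{fig:5} — through the construction while keeping $F$ an embedding. This is precisely where having $n$ in the stated range is convenient: in that range general position lets one perform the local modifications and the normalization of $\widetilde{F}$ without creating new self-intersections, exactly in the spirit of the lifts of special generic maps in \cite{saekitakase} and \cite{nishioka}. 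Once a smooth model of $M$ over $W_p$ together with $\widetilde{F}$ is in place, no further obstruction appears and the theorem follows.
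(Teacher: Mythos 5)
Your overall route is the same as the paper's: invoke Corollary \ref{cor:1}, reduce to constructing an embedding of $M$ into $W_p \times {\mathbb{R}}^{n-3}$ over $W_p$, handle the collar of $\partial W_p$ by the tautological formula $(p,v)\mapsto ((p,\parallel v \parallel^2),v)$, and then extend over the interior using triviality. The gap is precisely at the step you yourself flag as "smoothly interpolating on the overlap," and it is not a technicality. The manifold $M$ is obtained by gluing the trivial $D^{m-1}$-bundle over the collar to the trivial $S^{m-2}$-bundle over the interior along $\partial W_p \times (0,1]$ by a family of diffeomorphisms of $S^{m-2}$; even though each bundle is trivial separately, the two chosen trivializations need not agree on the overlap, so the formula "$\widetilde F(p,v)=v$ on the collar, fixed standard embedding of the fibre on the interior" does not define a single map on $M$. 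To reconcile them you need, over each boundary circle of $W_p$, a path in the space of embeddings of $S^{m-2}$ into ${\mathbb{R}}^{n-3}$ joining the two fibrewise families, and then you must extend the resulting family of embeddings over the interior surface, which deformation retracts onto a wedge of circles; the obstructions live in ${\pi}_0$ and ${\pi}_1$ of that embedding space. The hypothesis $n \geq \max \{\frac{3m+3}{2},m+3\}+1$ is there exactly so that this space is connected and simply connected (the metastable range); this, together with Nishioka's technique, is the fact the paper actually uses. "General position" alone does not supply isotopies between prescribed families of embeddings.

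A useful sanity check: if your construction with a fixed standard embedding of each interior fibre into ${\mathbb{R}}^{m-1}$ were correct as written, it would prove the theorem for every $n \geq m+2$, with no need for the $\frac{3m+3}{2}$ bound. That the theorem carries the Haefliger-type bound is a signal that the compatibility of the two trivializations is being assumed rather than proved. You do cite the metastable range and Haefliger-type results at the end, so the missing ingredient is within reach; but it must be installed at the interpolation step, in the form of the simple connectivity of the space of smooth embeddings of $S^{m-2}$ into ${\mathbb{R}}^{n-3}$, not as an appeal to general position or as an optional "abstract" alternative.
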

\begin{proof}
In the proof, we apply technique of constructions of embedding lifts of special generic
 maps such that the normal bundles of the singular sets are trivial used in \cite{nishioka} and for precise discussions, see also this. \\
We represent the Morse function $f$ as a composition as Corollary \ref{cor:1}; let $f_p$ be the pseudo
 special generic map into the $2$-dimensional compact manifold $W_p \subset {\mathbb{R}}^3$. We decompose this as presented in FIGURE \ref{fig:5}.

Let $C_p$ be a small collar neighborhood of $\partial W_p$. $C_p$ is, in the figure, the disjoint union of regions each of which is surrounded by a circle representing a connected component of the boundary $\partial W_p$ and a dotted circle. For the map ${f_p} {\mid}_{{f_p}^{-1}(C_p)}:{f_p}^{-1}(C_p) \rightarrow C_p \subset {\mathbb{R}}^3$, we
 can construct an embedding $e_C:{f_p}^{-1}(C_p) \rightarrow C_p \times {\mathbb{R}}^{n-3} \subset {\mathbb{R}}^{n}$ satisfying ${f_p} {\mid}_{{f_p}^{-1}(C_p)}={\pi}_{n,3} \circ e_C$. Since the pseudo special generic map is trivial, this lift can
 be extended to the whole space $W_p$ and to the outer space ${\mathbb{R}}^3$ by virtue of the fact that the space of all the smooth embeddings of $S^{m-2}$ into ${\mathbb{R}}^{n-3}$ with Whitney $C^{\infty}$ topology is simply connected or technique of \cite{nishioka}.
From these arguments, we have the desired embedding.
\end{proof}
\begin{Rem}
In \cite{kitazawa3}, the author has shown a statement similar to Theorem \ref{thm:2} for the case where we can lift the Morse function to a trivial pseudo special generic map represented also as a special generic map
 into ${\mathbb{R}}^2$ as Theorem 7  and in this, $n$
 is assumed to be not smaller than $\max \{\frac{3m+3}{2},m+2+1\}$. Note also that in the case, the source manifold is assumed to be orientable by the reason that Nishioka's
 technique \cite{nishioka} mentioned in the proof above is needed.  
\end{Rem}


\end{document}